\def\DateTime{01/Jan/2018}
\def\Version{Version 1.0}
\def\yes{\if00}
\def\no{\if01}
\def\iftenpt{\no}
\def\ifelevenpt{\no}
\def\iftwelvept{\yes}
\theoremstyle{plain}
\newtheorem{Theorem}{Theorem}[section]
\newtheorem{Proposition}[Theorem]{Proposition}
\newtheorem{Lemma}[Theorem]{Lemma}
\newtheorem{Corollary}[Theorem]{Corollary}
\newtheorem{Claim}{Claim}[Theorem]
\theoremstyle{definition}
\newtheorem{Example}[Theorem]{Example}
\renewcommand{\theTheorem}{\arabic{section}.\arabic{Theorem}}
\renewcommand{\theClaim}{\arabic{section}.\arabic{Theorem}.\arabic{Claim}}
\renewcommand{\theequation}{\arabic{section}.\arabic{equation}}
\newcommand{\ZZ}{{\mathbb{Z}}}
\newcommand{\RR}{{\mathbb{R}}}
\newcommand{\Spec}{\operatorname{Spec}}
\newcommand{\Supp}{\operatorname{Supp}}
\newcommand{\length}{\operatorname{length}}
\newcommand{\vol}{\operatorname{vol}}
\begin{document}

%%%%%%%%%%%
%% Title %%
%%%%%%%%%%%
\title[Lower estimates of multiplicities]%
{Note on lower estimates of multiplicities}
\author{Atsushi Moriwaki}
\address{Department of Mathematics, Faculty of Science,
Kyoto University, Kyoto, 606-8502, Japan}
\email{moriwaki@math.kyoto-u.ac.jp}
\date{\DateTime, (\Version)}

\maketitle

\section*{Introduction}
The lower estimates of multiplicities are crucial to establish Faltings' product theorem (cf. \cite{Faltings}, \cite{Ever}, \cite{Ferr} and \cite{EdixEver}).
In this note we would like to give an explicit lower bound by using a lower saturated subset of $\RR_{\geq 0}^n$.

Let $A := k[X_1, \ldots, X_n]$ be the polynomial ring of $n$-variables over a field $k$, $\mathfrak{p}$ be a prime ideal of $A$,
and $x_i$ be the image of $X_i$ in $A/\mathfrak{p}$ for $i=1, \ldots, n$.
Let 
\[
K_i := \begin{cases}
k(x_i, \ldots, x_n) & \text{if $i=1, \ldots, n$},\\
k & \text{if $i=n+1$}
\end{cases}
\]
for $i=1, \ldots, n+1$, 
and $\Upsilon := \{ i \in \{ 1, \ldots, n \} \mid \text{$K_i$ is algebraic over $K_{i+1}$} \}$.
Let $\Delta$ be a lower saturated subset of $\RR_{\geq 0}^n$, that is, $\Delta$ is a non-empty subset of $\RR_{\geq 0}^n$ such that
$[0, \xi_1] \times \cdots \times [0, \xi_n] \subseteq \Delta$ for
all $(\xi_1, \ldots, \xi_n) \in \Delta$.
Here we define $\Sigma$, $(\RR^n)_\Upsilon$, $\Delta_\Upsilon$ and $\partial^{\gamma}$ ($\gamma = (e_1, \ldots, e_n) \in \ZZ_{\geq 0}^n$) as follows:
\[
\begin{cases}
\Sigma := \Delta \cap \ZZ_{\geq 0}^n, & (\RR^n)_\Upsilon := \{ (\xi_1, \ldots, \xi_n) \in \RR^n \mid \text{$\xi_j = 0$ ($\forall j \not\in \Upsilon$)} \}  \cong \RR^{\Upsilon},\\[1ex]
\Delta_\Upsilon := \Delta \cap (\RR^n)_\Upsilon, & \partial^{\gamma} 
:= \partial^{e_1 + \cdots + e_n}/\partial X_1^{e_1} \cdots \partial X_n^{e_n}
\end{cases}
\]
Then one has the following:

\begin{Proposition}[cf. Corollary~\ref{coro:lower:estimate:mult}]
We assume that the characteristic of $k$ is zero. 
Let $\mathfrak{I}$ be an ideal of $A$ such that $\mathfrak{p}$ is a minimal prime of $\mathfrak{I}$ and
$\partial^{\gamma}(\mathfrak{I}) \subseteq \mathfrak{p}$ for all $\gamma \in \Sigma$. Then
one has $\length_{A_\mathfrak{p}}((A/\mathfrak{I})_\mathfrak{p}) \geq \vol_{(\RR^n)_\Upsilon}(\Delta_\Upsilon)$, where 
$\vol_{(\RR^n)_\Upsilon}(\Delta_\Upsilon)$ is the volume of $\Delta_\Upsilon$ in $(\RR^n)_\Upsilon$.
\end{Proposition}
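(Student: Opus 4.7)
The plan is to reduce to the case in which $\mathfrak p$ is maximal, then pass to the completion at $\mathfrak p$, where the derivative hypothesis translates transparently into the vanishing of Taylor coefficients indexed by $\Sigma_\Upsilon$.

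First I would invert the transcendental directions. The indices $j \notin \Upsilon$ pick out a transcendence basis $\{x_j\}$ of $\operatorname{Frac}(A/\mathfrak p)$ over $k$, so $\mathfrak p \cap k[X_j : j \notin \Upsilon] = (0)$. Setting $S := k[X_j : j \notin \Upsilon] \setminus \{0\}$, I identify $A_{\mathfrak p}$ with $A'_{\mathfrak p'}$, where $A' := S^{-1}A = K[X_i : i \in \Upsilon]$, $K := k(X_j : j \notin \Upsilon)$, and $\mathfrak p' := S^{-1}\mathfrak p$ is maximal in $A'$ because $A'/\mathfrak p' = \operatorname{Frac}(A/\mathfrak p) =: \kappa$ is a field (the $x_i$ being algebraic over $K$). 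Lengths are identical, and for each $\gamma \in \Sigma_\Upsilon$ the operator $\partial^\gamma$ involves only $\partial/\partial X_i$ with $i \in \Upsilon$; hence it acts as a $K$-derivation of $A'$ satisfying $\partial^\gamma(\mathfrak I A') \subseteq \mathfrak p'$.

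Next I pass to the completion $\hat R$ of $A'_{\mathfrak p'}$, a complete regular local ring of dimension $r := |\Upsilon|$ with residue field $\kappa$. In characteristic zero, $\kappa/K$ is separable algebraic, so $\Omega^1_{\kappa/K} = 0$; Cohen's structure theorem then produces a coefficient-field embedding $\kappa \hookrightarrow \hat R$ on which every $K$-derivation vanishes. Let $\tilde x_i \in \kappa \subseteq \hat R$ be the lift of $x_i = X_i \bmod \mathfrak p'$ and put $Z_i := X_i - \tilde x_i$. Then $\partial Z_j/\partial X_i = \delta_{ij}$ and the $Z_i$ form a regular system of parameters, yielding $\hat R \simeq \kappa[[Z_i : i \in \Upsilon]]$. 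For any $f \in \hat R$ with expansion $f = \sum_\gamma c_\gamma(f) Z^\gamma$, the identity $c_\gamma(f) = \tfrac{1}{\gamma!}(\partial^\gamma f) \bmod \mathfrak m_{\hat R}$ (well-defined since $\gamma!$ is invertible in characteristic zero) converts the hypothesis into $c_\gamma(\hat{\mathfrak I}) = 0$ for every $\gamma \in \Sigma_\Upsilon$.

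Finally, the lower saturation of $\Delta_\Upsilon$ makes $\Sigma_\Upsilon$ downward closed in $\ZZ_{\geq 0}^r$, so the set of power series whose support lies outside $\Sigma_\Upsilon$ is a monomial ideal $\mathfrak M := (Z^\gamma : \gamma \in \ZZ_{\geq 0}^r \setminus \Sigma_\Upsilon) \hat R$, with $\hat R/\mathfrak M$ of $\hat R$-length $\#\Sigma_\Upsilon$ (it has $\kappa$-basis $\{Z^\gamma\}_{\gamma \in \Sigma_\Upsilon}$). Combined with the standard unit-cube estimate $\#\Sigma_\Upsilon \ge \vol_{(\RR^n)_\Upsilon}(\Delta_\Upsilon)$, the inclusion $\hat{\mathfrak I} \subseteq \mathfrak M$ yields
\begin{equation*}
\length_{A_\mathfrak p}((A/\mathfrak I)_\mathfrak p) = \length_{\hat R}(\hat R/\hat{\mathfrak I}) \ge \length_{\hat R}(\hat R/\mathfrak M) = \#\Sigma_\Upsilon \ge \vol_{(\RR^n)_\Upsilon}(\Delta_\Upsilon).
\end{equation*}
The hardest step is the passage to the completion: lifting $\kappa$ into $\hat R$, recentering the $X_i$ at this lift, and confirming that $\partial/\partial X_i$ acts as the formal partial derivative in the new coordinate $Z_i$. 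All three rest on the separability of $\kappa/K$, that is, on the characteristic-zero hypothesis (which also furnishes the invertibility of $\gamma!$ used above).
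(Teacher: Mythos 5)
Your argument is correct, and it takes a genuinely different route from the paper's once the common first step is done. Both proofs begin by inverting $S = k[X_j : j \notin \Upsilon] \setminus \{0\}$ so that $\mathfrak p$ becomes a maximal ideal $\mathfrak p'$ of $A' = K[X_i : i \in \Upsilon]$ with residue field $\kappa$ finite over $K$, and both reduce the volume bound to the cardinality bound $\#\Sigma_\Upsilon \geq \vol_{(\RR^n)_\Upsilon}(\Delta_\Upsilon)$ via Proposition~\ref{prop:lower:saturated}(3). They diverge on how to prove $\length \geq \#\Sigma_\Upsilon$ in the maximal-ideal case. The paper (Proposition~\ref{prop:length:estimate:maximal:ideal}) stays inside polynomial rings: it first reduces to $\sqrt{\mathfrak I} = \mathfrak m$, then handles the rational case ($\kappa = k$) by translating $\mathfrak m$ to the origin and invoking the elementary monomial-support Lemma~\ref{lem:key:estimate}; for general residue fields it base-changes to $\overline k$, factors $\mathfrak I_{\overline k} = \mathfrak I_1 \cdots \mathfrak I_r$ over the conjugate maximal ideals, and runs a separate Leibniz induction (Claim~\ref{claim:prop:length:estimate:maximal:ideal:02}) to propagate the derivative hypothesis to each $\mathfrak I_i$ before a dimension count. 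You instead pass to the completion $\hat R \cong \kappa[[Z_i]]_{i \in \Upsilon}$; separability of $\kappa/K$ (the only place char $0$ is used) gives the coefficient field and forces $\partial/\partial X_i$ to kill $\kappa$, so that the hypothesis becomes transparently $\hat{\mathfrak I} \subseteq \mathfrak M := (Z^\gamma : \gamma \notin \Sigma_\Upsilon)$, and $\length(\hat R/\mathfrak M) = \#\Sigma_\Upsilon$ by inspection. Your route is conceptually cleaner — the monomial ideal $\mathfrak M$ makes the bound immediate, and you avoid the $\overline k$-splitting, the factorization of $\mathfrak I_{\overline k}$, and the auxiliary Leibniz claim — at the cost of heavier machinery (Cohen's structure theorem, existence and uniqueness of the coefficient field, extension of $\partial/\partial X_i$ to the completion). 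The paper's route is more elementary and self-contained, at the cost of the two-stage reduction and the somewhat delicate Claim~\ref{claim:prop:length:estimate:maximal:ideal:02}. Two small points worth spelling out if you wrote this up in full: (i) one must check $\partial/\partial X_i$ extends continuously to $\hat R$ (it sends $(\mathfrak p')^m$ into $(\mathfrak p')^{m-1}$, so it does); (ii) the passage from $\partial^\gamma(\mathfrak I A') \subseteq \mathfrak p'$ to $\partial^\gamma(\hat{\mathfrak I}) \subseteq \mathfrak m_{\hat R}$ still needs a Leibniz step together with the downward-closedness of $\Sigma_\Upsilon$ — this is the same content as the paper's Claim~\ref{claim:prop:length:estimate:maximal:ideal:01}.
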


\section{Lower saturated subset}
For $\varphi = (\xi_1, \ldots, \xi_n) \in \RR_{\geq 0}^n$,
we set
$\mathbb{I}_\varphi := [0, \xi_1] \times \cdots \times [0, \xi_n]$.
A subset $\Delta$ of $\RR_{\geq 0}^n$ is said to be %we say that $\Delta$ is 
{\em lower saturated} if
$(0, \ldots, 0) \in \Delta$ and $\mathbb{I}_\varphi \subseteq \Delta$ for all $\varphi \in \Delta$.
Note that a lower saturated subset of $\RR_{\geq 0}^n$
is Lebesgue measurable (for details, see Proposition~\ref{prop:measurable:Delta}).
Similarly a subset $\Sigma$ of $\ZZ_{\geq 0}^n$ 
is said to be {\em lower
$\ZZ_{\geq 0}$-saturated} if $(0, \ldots, 0) \in \Sigma$ and
$\mathbb{I}_{\gamma} \cap \ZZ_{\geq 0}^n \subseteq \Sigma$ for all $\gamma \in \Sigma$.

\begin{Proposition}\label{prop:lower:saturated}
Let $\Sigma$ be a lower $\ZZ_{\geq 0}$-saturated subset of $\ZZ_{\geq 0}^n$. If we set
\[
\Delta(\Sigma) := \bigcup_{\gamma \in \Sigma} \mathbb{I}_{\gamma}\quad\text{and}\quad
\Delta'(\Sigma) := \kern-1em \bigcup_{\gamma = (e_1, \ldots, e_n) \in \Sigma} [e_1, e_1 + 1) \times \cdots \times [e_n, e_n + 1),
\]
then one has the following:
\begin{enumerate}[(1)]
\item
$\Delta(\Sigma)$ is a closed lower saturated subset of $\RR_{\geq 0}^n$
such that $\Delta(\Sigma) \cap \ZZ_{\geq 0}^n = \Sigma$.

\item
$\Delta'(\Sigma)$ is a lower saturated subset of $\RR_{\geq 0}^n$
such that $\Delta'(\Sigma) \cap \ZZ_{\geq 0}^n = \Sigma$.

\item
Let $\Delta$ be a lower saturated subset of $\RR_{\geq 0}^n$
such that $\Delta \cap \ZZ_{\geq 0}^n = \Sigma$. Then one has 
$\Delta(\Sigma) \subseteq \Delta \subseteq \Delta'(\Sigma)$. In particular,
$\vol(\Delta) \leq \#(\Sigma)$. 
\end{enumerate}
\end{Proposition}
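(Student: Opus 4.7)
My plan rests on the coordinatewise floor and ceiling maps, $\lfloor\varphi\rfloor := (\lfloor\xi_1\rfloor,\ldots,\lfloor\xi_n\rfloor)$ and $\lceil\varphi\rceil := (\lceil\xi_1\rceil,\ldots,\lceil\xi_n\rceil)$, together with the contrapositive of lower $\ZZ_{\geq 0}$-saturation: if $\delta \in \ZZ_{\geq 0}^n \setminus \Sigma$ and $\delta' \geq \delta$ componentwise, then $\delta' \notin \Sigma$ as well. The first thing I would record is that, since $\gamma \in \ZZ_{\geq 0}^n$, one has $\varphi \in \mathbb{I}_\gamma$ if and only if $\lceil\varphi\rceil \leq \gamma$ componentwise; combined with lower saturation this yields the convenient reformulation $\Delta(\Sigma) = \{\varphi \in \RR_{\geq 0}^n : \lceil\varphi\rceil \in \Sigma\}$, which will be used repeatedly.

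For (1), with that description in hand the equality $\Delta(\Sigma) \cap \ZZ_{\geq 0}^n = \Sigma$ and the lower saturation of $\Delta(\Sigma)$ are immediate: for the latter, $\psi \in \mathbb{I}_\varphi$ forces $\lceil\psi\rceil \leq \lceil\varphi\rceil$, so $\lceil\psi\rceil \in \Sigma$. I would argue closedness by showing that $\lceil\varphi'\rceil \geq \lceil\varphi\rceil$ componentwise for every $\varphi'$ in a sufficiently small neighborhood of a $\varphi$ with $\lceil\varphi\rceil \notin \Sigma$. The delicate case is coordinates $\xi_i$ that happen to be integers, where the ceiling of $\xi_i \pm \epsilon$ takes values in $\{\xi_i, \xi_i+1\}$, both $\geq \xi_i = \lceil\xi_i\rceil$, for $\epsilon$ small enough. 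Upward closure of $\ZZ_{\geq 0}^n \setminus \Sigma$ then forces $\lceil\varphi'\rceil \notin \Sigma$, so the complement of $\Delta(\Sigma)$ is open.

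For (2), the intersection identity is immediate since the only lattice point of the half-open cube $[e_1,e_1+1)\times\cdots\times[e_n,e_n+1)$ is $\gamma=(e_1,\ldots,e_n)$. Lower saturation then follows by a symmetric floor argument: if $\varphi$ lies in the cube attached to some $\gamma \in \Sigma$ and $\psi \in \mathbb{I}_\varphi$, then $\lfloor\psi\rfloor \leq \gamma$ componentwise, hence $\lfloor\psi\rfloor \in \Sigma$, and $\psi$ sits in the cube attached to $\lfloor\psi\rfloor$. For (3), $\Delta(\Sigma) \subseteq \Delta$ follows from $\Sigma = \Delta \cap \ZZ_{\geq 0}^n \subseteq \Delta$ and lower saturation of $\Delta$; for $\Delta \subseteq \Delta'(\Sigma)$, any $\varphi \in \Delta$ has $\lfloor\varphi\rfloor \in \Delta \cap \ZZ_{\geq 0}^n = \Sigma$ and $\varphi$ automatically belongs to the cube attached to $\lfloor\varphi\rfloor$. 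The volume bound $\vol(\Delta) \leq \#(\Sigma)$ is then immediate from the disjointness of the unit cubes constituting $\Delta'(\Sigma)$.

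The only step with genuine topological content is the closedness assertion in (1), which I expect to be the main obstacle; the remainder is bookkeeping with floors, ceilings, and the combinatorics of lower saturation.
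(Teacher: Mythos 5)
Your argument is correct and rests on exactly the reformulations $\Delta(\Sigma)=\{\varphi\in\RR_{\geq 0}^n:\lceil\varphi\rceil\in\Sigma\}$ and $\Delta'(\Sigma)=\{\varphi\in\RR_{\geq 0}^n:\lfloor\varphi\rfloor\in\Sigma\}$ that drive the paper's proof, and your handling of (2) and (3) coincides with the paper's in substance. The only stylistic difference is the closedness step of (1): you show the complement is open via the local estimate $\lceil\varphi'\rceil\geq\lceil\varphi\rceil$ for $\varphi'$ near $\varphi$ together with upward closure of $\ZZ_{\geq 0}^n\setminus\Sigma$, whereas the paper takes a convergent sequence $\varphi_m\in\Delta(\Sigma)$, extracts a subsequence on which $\lceil\varphi_m\rceil$ is a constant $\gamma\in\Sigma$, and concludes $\varphi\in\mathbb{I}_\gamma$; both are routine and interchangeable.
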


\begin{proof}
In the followings, for $\xi \in \RR$, $\min\{ a \in \ZZ \mid \xi \leq a \}$ and
$\max \{ b \in \ZZ \mid b \leq \xi \}$ are denoted by $\lceil \xi \rceil$ and $\lfloor \xi \rfloor$, respectively.

\medskip
(1) The lower saturatedness of $\Delta(\Sigma)$ is obvious.
First let us see that $\Delta(\Sigma)$ is closed.
Let $\{ \varphi_m \}_{m=1}^{\infty}$ be a sequence of $\RR^n$ such that 
$\varphi_m \in \Delta(\Sigma)$ for all $m \geq 1$ and $\varphi := \lim_{m\to\infty} \varphi_m$ exists. If we set $\varphi_m = (\xi_{m1}, \ldots, \xi_{mn})$ and
$\gamma_m = (\lceil \xi_{m1}\rceil, \ldots, \lceil \xi_{mn} \rceil)$, then $\gamma_m \in \Sigma$. As $\lim_{m\to\infty} \varphi_m$ exists, there are $\gamma \in \Sigma$ and
a subsequence $\{ \varphi_{m_i} \}$ of $\{ \varphi_m \}$ such that
$\gamma_{m_i} = \gamma$ for all $i$. 
Then, as $\varphi_{m_i} \in \mathbb{I}_\gamma$, one has $\varphi = \lim_{i\to\infty} \varphi_{m_i} \in \mathbb{I}_\gamma$, so that
$\varphi \in \Delta(\Sigma)$.

Finally let us see that $\Delta(\Sigma) \cap \ZZ_{\geq 0}^n = \Sigma$. Indeed, if $\gamma \in \Delta(\Sigma) \cap \ZZ_{\geq 0}^n$,
then there is $\gamma' \in \Sigma$ such that $\gamma \in \mathbb{I}_{\gamma'}$, so that
$\gamma \in \mathbb{I}_{\gamma'} \cap \ZZ_{\geq 0}^n \subseteq \Sigma$.

\medskip
(2) Note that $\Delta'(\Sigma) = \{ (\xi_1, \ldots, \xi_n) \in \RR_{\geq 0}^n \mid (\lfloor \xi_1 \rfloor, \ldots, \lfloor \xi_n \rfloor) \in \Sigma \}$.
Therefore the lower saturatedness follows from the lower $\ZZ_{\geq 0}$-saturatedness of $\Sigma$.
The assertion $\Delta'(\Sigma) \cap \ZZ_{\geq 0}^n = \Sigma$ is obvious.

\medskip
(3) For $\gamma \in \Sigma$, one has
$\mathbb{I}_{\gamma} \subseteq \Delta$, so that the first assertion $\Delta(\Sigma) \subseteq \Delta$ follows.
Next let us see $\Delta \subseteq \Delta'(\Sigma)$. Indeed,
if we set $\gamma = (\lfloor \xi_1 \rfloor, \ldots, \lfloor \xi_n \rfloor)$ for $\varphi = (\xi_1, \ldots, \xi_n) \in \Delta$, then $\gamma \in \ZZ_{\geq 0}^n$ and $\gamma \in \mathbb{I}_\varphi$, so that
$\gamma \in \Delta \cap \ZZ_{\geq 0}^n = \Sigma$, and hence $\varphi \in \Delta'(\Sigma)$. 
The final assertion is obvious because $\vol(\Delta'(\Sigma)) = \#(\Sigma)$.
\end{proof}

\section{The key lemma}
Let $k$ be a field and
$A := k[X_1, \ldots, X_n]$ be the polynomial ring of $n$-variables over $k$. 
For $\gamma = (e_1, \ldots, e_n) \in \ZZ_{\geq 0}^n$,
we denote $e_1 + \cdots + e_n$ by $|\gamma|$ and the monomial $X_1^{e_1} \cdots X_n^{e_n}$
by $X^\gamma$. We set
$\Supp(f) := \{ \gamma \in \ZZ_{\geq 0}^n \mid a_{\gamma} \not= 0 \}$
for $f = \sum_{\gamma \in \ZZ_{\geq 0}^n} a_{\gamma} X^{\gamma} \in A$.
It is easy to see that
\begin{equation}\label{eqn:lem:key:estimate:01}
\Supp(f + g) \subseteq \Supp(f) \cup \Supp(g)
\end{equation}
for all $f, g \in A$.

\begin{Lemma}\label{lem:key:estimate}
Let $\mathfrak{I}$ be an ideal of $A$.
Let $\Sigma$ be a non-empty subset of $\ZZ_{\geq 0}^n$ such that
$\Sigma \cap \bigcup_{f \in \mathfrak{I}} \Supp(f) = \emptyset$.
%$\gamma \not\in \Supp(f)$ for all $\gamma \in \Sigma$ and $f \in \mathfrak{I}$.
Then $\length_A (A/\mathfrak{I}) \geq \#(\Sigma)$.
\end{Lemma}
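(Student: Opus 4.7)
My plan is to exhibit a strictly increasing chain of $A$-submodules of $A/\mathfrak{I}$ of length $\#(\Sigma)$, which immediately gives $\length_A(A/\mathfrak{I}) \geq \#(\Sigma)$. The hypothesis on $\Sigma$ is inherited by every subset, so if the finite case is settled then the general case follows by taking a supremum over finite subsets of $\Sigma$; hence I may assume $\Sigma = \{\gamma_1, \ldots, \gamma_s\}$ is finite.

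I then fix an ordering in which $|\gamma_1| \geq |\gamma_2| \geq \cdots \geq |\gamma_s|$ (by total degree). Let $N_i$ denote the $A$-submodule of $A/\mathfrak{I}$ generated by the images of $X^{\gamma_1}, \ldots, X^{\gamma_i}$ (with $N_0 = 0$). Then $N_0 \subseteq N_1 \subseteq \cdots \subseteq N_s$, and I will show each inclusion is strict, which produces the desired chain.

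For the strict inclusion $N_{i-1} \subsetneq N_i$, suppose to the contrary that the image of $X^{\gamma_i}$ lies in $N_{i-1}$, so that
\[
h := X^{\gamma_i} - \sum_{j<i} g_j X^{\gamma_j} \in \mathfrak{I}
\]
for some $g_j \in A$. Reading off the coefficient of $X^{\gamma_i}$ in $h$ gives $1 - \sum_{j<i} (g_j)_{\gamma_i - \gamma_j}$, where $(g_j)_{\gamma_i - \gamma_j}$ is understood to vanish whenever $\gamma_i - \gamma_j \notin \ZZ_{\geq 0}^n$. The crux of the argument --- and the only place where the chosen ordering matters --- is that $\gamma_j \not\leq \gamma_i$ componentwise for every $j < i$: indeed, $\gamma_j \leq \gamma_i$ would force $|\gamma_j| \leq |\gamma_i|$, and combined with $|\gamma_j| \geq |\gamma_i|$ this forces $|\gamma_j| = |\gamma_i|$ and hence $\gamma_j = \gamma_i$, contradicting distinctness. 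Therefore every term in the sum vanishes and the coefficient of $X^{\gamma_i}$ in $h$ equals $1 \neq 0$, placing $\gamma_i \in \Supp(h) \subseteq \bigcup_{f \in \mathfrak{I}} \Supp(f)$ and contradicting the hypothesis $\Sigma \cap \bigcup_{f \in \mathfrak{I}} \Supp(f) = \emptyset$. The main obstacle is recognising that the decreasing-total-degree ordering supplies the needed antichain property; once the ordering is in hand, the contradiction reduces to a single coefficient computation.
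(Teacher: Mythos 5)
Your proof is correct and follows essentially the same route as the paper: reduce to finite $\Sigma$, order by decreasing total degree, build the chain $\mathfrak{I} \subsetneq \mathfrak{I} + (X^{\gamma_1}) \subsetneq \cdots$ in $A/\mathfrak{I}$, and use the support hypothesis together with the degree ordering to show each inclusion is strict. The only cosmetic difference is that you extract the coefficient of $X^{\gamma_i}$ directly, whereas the paper invokes $\Supp(f+g)\subseteq\Supp(f)\cup\Supp(g)$ and locates $\gamma_i$ in some $\Supp(X^{\gamma_j}f_j)$; the underlying observation (that $\gamma_j \not\leq \gamma_i$ componentwise for $j<i$) is the same.
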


\begin{proof}
It is sufficient to show that $\length_A (A/\mathfrak{I}) \geq \#(\Sigma')$ for any finite subset $\Sigma'$ of $\Sigma$, so that
we may assume that $\Sigma$ is a finite set.
We set $\Sigma = \{ \gamma_1, \ldots, \gamma_N \}$ such that $\gamma_i$'s are distinct and
$|\gamma_i| \leq |\gamma_j|$ for $1 \leq j \leq i \leq N$.
Let 
\[
\mathfrak{I}_i := \begin{cases}
\mathfrak{I} & \text{if $i=0$}, \\
\mathfrak{I} + X^{\gamma_1}A + \cdots + X^{\gamma_i}A & \text{if $1 \leq i \leq N$}.
\end{cases}
\]
Let us check that $\mathfrak{I}_{i-1} \subsetneq \mathfrak{I}_i$ for each $i=1, \ldots, N$.
Otherwise, there are $f_1, \ldots, f_{i-1} \in A$ and $f \in \mathfrak{I}$ such that $X^{\gamma_i} =  f + X^{\gamma_1}f_1 + \cdots + X^{\gamma_{i-1}}f_{i-1}$, 
so that, by \eqref{eqn:lem:key:estimate:01},
\[
\gamma_i \in \Supp(f) \cup \Supp( X^{\gamma_1}f_1) \cup \cdots \cup \Supp( X^{\gamma_{i-1}}f_{i-1}).
\]
Therefore, as $\gamma_i \not\in \Supp(f)$, there is $j$ such that
$1 \leq j < i$ and $\gamma_i \in \Supp( X^{\gamma_j}f_j)$, that is,
$\gamma_i = \gamma_j + \gamma$ for some $\gamma \in \Supp(f_j)$, and hence
$\gamma = 0$ because $|\gamma_i| \leq |\gamma_j|$. 
This is a contradiction.
Therefore one has the assertion of the lemma.
\end{proof}

\section{Differential operators and multiplicities}
From now on, we assume that the characteristic of $k$ is zero.
For $\gamma = (e_1, \ldots, e_n) \in \ZZ_{\geq 0}^n$, we set
\[
\partial^{\gamma} =\frac{\partial^{\gamma}}{\partial X^\gamma} 
:= \frac{\partial^{|\gamma|}}{\partial X_1^{e_1} \cdots \partial X_n^{e_n}}
\quad\text{and}\quad
\partial_{\gamma} := \frac{1}{e_1! \cdots e_n!}\partial^{\gamma}.
\]
Note that Leibnitz's rule can be written by
\begin{equation}
\label{eqn:Leibniz:rule:mult}
\partial_\gamma(f_1 \cdots f_r) = \sum_{\substack{\gamma_1, \ldots, \gamma_r \in \ZZ_{\geq 0}^n\\ \gamma_1 + \cdots + \gamma_r = \gamma}} \partial_{\gamma_1}(f_1) \cdots \partial_{\gamma_r}(f_r)
\end{equation}
in terms of the normalized differential operators $\partial_{\gamma}$'s.

Let $\Delta$ be a lower saturated subset of $\RR_{\geq 0}^n$ and
$\Sigma := \Delta \cap \ZZ_{\geq 0}^n$. Note that $\Sigma$ is a lower $\ZZ_{\geq 0}$-saturated subset of $\ZZ_{\geq 0}^n$.

\begin{Proposition}\label{prop:length:estimate:maximal:ideal}
Let $\mathfrak{m}$ be a maximal ideal of $A$ and $\mathfrak{I}$ be an ideal of $A$ such that $\mathfrak{m}$ is a minimal prime of $\mathfrak{I}$.
If $\partial_{\gamma}(\mathfrak{I}) \subseteq \mathfrak{m}$
for all $\gamma \in \Sigma$, then 
$\length_{A_\mathfrak{m}}((A/\mathfrak{I})_\mathfrak{m}) 
\geq \#(\Sigma)$.
\end{Proposition}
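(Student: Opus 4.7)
The plan is to reduce to Lemma~\ref{lem:key:estimate}. Two obstructions stand in the way: $\mathfrak{I}$ need not be $\mathfrak{m}$-primary, and the residue field $\kappa(\mathfrak{m})$ need not equal $k$ (equivalently, $\mathfrak{m}$ need not be of the form $(X_1 - a_1, \ldots, X_n - a_n)$ for some $a \in k^n$, which is what makes the differential condition of the proposition match the support condition of the key lemma).

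To handle the first obstruction, take a primary decomposition $\mathfrak{I} = \mathfrak{q} \cap \mathfrak{J}$ with $\mathfrak{q}$ the $\mathfrak{m}$-primary component and $\mathfrak{J}$ the intersection of the other primary components. Since $\mathfrak{m}$ is a minimal prime of $\mathfrak{I}$, no other associated prime of $A/\mathfrak{I}$ lies in $\mathfrak{m}$, so $\mathfrak{J} \not\subseteq \mathfrak{m}$ and there exists $s \in \mathfrak{J} \setminus \mathfrak{m}$ with $s\mathfrak{q} \subseteq \mathfrak{I}$. Localization yields $\length_{A_\mathfrak{m}}((A/\mathfrak{I})_\mathfrak{m}) = \length_A(A/\mathfrak{q})$. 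To promote the derivative hypothesis to $\mathfrak{q}$, I would induct on $|\gamma|$: for $f \in \mathfrak{q}$, Leibniz's rule \eqref{eqn:Leibniz:rule:mult} applied to $sf \in \mathfrak{I}$ gives
\[
s\,\partial_\gamma(f) = \partial_\gamma(sf) - \sum_{\substack{\gamma_1 + \gamma_2 = \gamma \\ \gamma_1 \neq 0}} \partial_{\gamma_1}(s)\,\partial_{\gamma_2}(f),
\]
where each $\gamma_2$ in the sum is $\leq \gamma$ coordinatewise with $|\gamma_2| < |\gamma|$. The lower $\ZZ_{\geq 0}$-saturatedness of $\Sigma$ forces $\gamma_2 \in \Sigma$, so the inductive hypothesis places the right-hand side in $\mathfrak{m}$, and cancelling $s \notin \mathfrak{m}$ (which is prime) gives $\partial_\gamma(f) \in \mathfrak{m}$. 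The base case $\gamma = 0$ is $sf \in \mathfrak{m} \Rightarrow f \in \mathfrak{m}$.

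To handle the second obstruction, base-change to the algebraic closure: set $\overline{A} = \overline{k}[X_1, \ldots, X_n]$, $\overline{\mathfrak{q}} = \mathfrak{q}\overline{A}$, and fix $\overline{\mathfrak{m}} \subseteq \overline{A}$ maximal over $\mathfrak{m}$. By the Nullstellensatz, $\overline{\mathfrak{m}} = (X_1 - a_1, \ldots, X_n - a_n)$ for some $a \in \overline{k}^{\,n}$, and the translation $Y_i = X_i - a_i$ preserves the differential operators and sends $\overline{\mathfrak{m}}$ to the origin. The same Leibniz calculation descends the hypothesis to $\overline{\mathfrak{q}}$ at $\overline{\mathfrak{m}}$, and a second application of the primary-reduction argument in $\overline{A}$ extracts the $\overline{\mathfrak{m}}$-primary component $\overline{\mathfrak{q}}'$ of $\overline{\mathfrak{q}}$ (still satisfying the derivative condition). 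In these coordinates $\partial_\gamma(g) \in \overline{\mathfrak{m}}$ means exactly that the coefficient of $Y^\gamma$ in $g$ vanishes, i.e.\ $\gamma \notin \Supp(g)$, and Lemma~\ref{lem:key:estimate} delivers $\length_{\overline{A}}(\overline{A}/\overline{\mathfrak{q}}') \geq \#(\Sigma)$. Since $\mathrm{Gal}(\overline{k}/k)$ acts transitively on the primes of $\overline{A}$ above $\mathfrak{m}$, a dimension count (computing $\dim_{\overline{k}} \overline{A}/\overline{\mathfrak{q}}$ both as $[\kappa(\mathfrak{m}):k]\cdot\length_{A_\mathfrak{m}}((A/\mathfrak{I})_\mathfrak{m})$ and as a product over the Galois orbit) identifies the local length $\length_{\overline{A}}(\overline{A}/\overline{\mathfrak{q}}')$ with $\length_{A_\mathfrak{m}}((A/\mathfrak{I})_\mathfrak{m})$, closing the argument.

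The main obstacle is the first step: promoting the differential condition across primary decomposition. This is precisely where the lower saturatedness of $\Sigma$ is essential---without it, some $\gamma_2$ arising in the Leibniz sum could fall outside $\Sigma$ and the induction would collapse. The base change, translation, and length bookkeeping in the second step are technically nontrivial but formal.
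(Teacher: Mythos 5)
Your proposal is correct and follows the same broad outline as the paper's proof: reduce to the $\mathfrak{m}$-primary case, base-change to $\overline{k}$ and translate a maximal ideal over $\mathfrak{m}$ to the origin, apply Lemma~\ref{lem:key:estimate}, and do a dimension count. The differences are in how the derivative condition is transported. For the primary reduction, the paper sets $\mathfrak{I}' = \mathfrak{I}A_\mathfrak{m} \cap A$ and applies Leibniz to $fa$ with $f \in \mathfrak{I}$, $a \in A_\mathfrak{m}$; this needs no induction on $|\gamma|$ because every $\partial_{\gamma_1}(f)$ that appears already lies in $\mathfrak{m}$ by hypothesis (using lower saturatedness). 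You instead pick a clearing element $s \in \mathfrak{J} \setminus \mathfrak{m}$ with $s\mathfrak{q} \subseteq \mathfrak{I}$ and solve for $s\,\partial_\gamma(f)$, which requires the induction on $|\gamma|$ you describe; both are sound. Over $\overline{k}$, the paper factors $\mathfrak{I}_{\overline{k}} = \mathfrak{I}_1 \cdots \mathfrak{I}_r$, proves $\partial_\gamma(\mathfrak{I}_i) \subseteq \mathfrak{m}_i$ for every $i$ via Leibniz on $r$-fold products with an auxiliary element $f_j \in \mathfrak{I}_j \setminus \mathfrak{m}_1$ for $j \geq 2$, and then simply sums $\dim_{\overline{k}}(A_{\overline{k}}/\mathfrak{I}_i) \geq \#(\Sigma)$; you instead extract a single $\overline{\mathfrak{m}}$-primary component, rerun your clearing-element induction there, apply Lemma~\ref{lem:key:estimate} once, and invoke $\mathrm{Gal}(\overline{k}/k)$-transitivity to equate the local contributions. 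The Galois step is not needed in the paper's version precisely because it verifies the hypothesis at every $\mathfrak{m}_i$ rather than just one; your version trades that uniformity for a symmetry argument. One small economy you could note: $\partial_\gamma(\overline{\mathfrak{q}}) \subseteq \overline{\mathfrak{m}}$ follows immediately from $\overline{k}$-linearity of $\partial_\gamma$ together with $\overline{\mathfrak{q}} = \mathfrak{q} \otimes_k \overline{k}$, so no Leibniz calculation is needed at that stage.
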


\begin{proof}
First of all, let us claim the following:

\begin{Claim}\label{claim:prop:length:estimate:maximal:ideal:01}
We may assume that $\sqrt{\mathfrak{I}} = \mathfrak{m}$. 
\end{Claim}

\begin{proof}
Let us see that $\partial_{\gamma}(\mathfrak{I}A_\mathfrak{m}) \subseteq \mathfrak{m}A_\mathfrak{m}$ 
for any $\gamma \in \Sigma$.
Indeed, for $f \in \mathfrak{I}$ and $a \in A_\mathfrak{m}$, by virtue of Leibnitz's rule \eqref{eqn:Leibniz:rule:mult},
\[
\partial_\gamma(fa) = \sum_{\substack{\gamma_1, \gamma_2 \in \ZZ_{\geq 0}^n\\ \gamma_1 +\gamma_2 = \gamma}} \partial_{\gamma_1}(f) \cdot \partial_{\gamma_2}(a),
\]
so that $\partial_\gamma(fa) \in \mathfrak{m}A_\mathfrak{m}$ by our assumptions. Therefore, if we set $\mathfrak{I}' = \mathfrak{I}A_\mathfrak{m} \cap A$, then $\sqrt{\mathfrak{I}'} = \mathfrak{m}$ and
$\partial_{\gamma}(\mathfrak{I}') \subseteq \mathfrak{m}A_\mathfrak{m} \cap A = \mathfrak{m}$.
Moreover, $\length_{A_\mathfrak{m}}((A/\mathfrak{I})_\mathfrak{m}) = \length_{A_\mathfrak{m}}((A/\mathfrak{I}')_\mathfrak{m})$, as required.
\end{proof}

From now on, we assume that $\sqrt{\mathfrak{I}} = \mathfrak{m}$, that is, $\Supp(A/\mathfrak{I}) = \{ \mathfrak{m} \}$.
In this case, note that $\length_{A_\mathfrak{m}}((A/\mathfrak{I})_\mathfrak{m}) = \length_{A}(A/\mathfrak{I})$.

First we assume that $k = A/\mathfrak{m}$. Then $\length_{A}(A/\mathfrak{I}) = \dim_k (A/\mathfrak{I})$.
Moreover there are $a_1, \ldots, a_n \in k$ such that $\mathfrak{m} = (X_1 - a_1, \ldots, X_n - a_n)$.
Note that $\partial/\partial X_i = \partial/\partial X'_i$,
where $X'_i = X_i - a_i$ for $i=1, \ldots, n$, so that,
replacing $X_i$ by $X'_i$, we may assume that
$a_1 = \cdots = a_n = 0$. By Lemma~\ref{lem:key:estimate},
it is sufficient to show $\Sigma \cap \bigcup_{f \in \mathfrak{I}} \Supp(f) = \emptyset$.
%that $\gamma \not\in \Supp(f)$ for all $\gamma \in \Sigma$ and $f \in \mathfrak{I}$. 
Otherwise, there are $\gamma \in \Sigma$ and $f \in \mathfrak{I}$ such that
$\gamma \in \Supp(f)$, so that we can set
\[
f = a_{\gamma} X^{\gamma} + \sum_{\gamma' \not= \gamma} a_{\gamma'} X^{\gamma'} \quad (a_{\gamma} \not= 0),
\]
and hence $\partial_{\gamma}(f) = a_{\gamma} + \sum_{\gamma' \not= \gamma} a_{\gamma'} \partial_{\gamma}(X^{\gamma'})$. Note that $\partial_{\gamma}(X^{\gamma'}) \in \mathfrak m$ for
all $\gamma'$ with $\gamma' \not= \gamma$. Thus $\partial_{\gamma}(f) \not\in \mathfrak m$,
which contradicts to $\gamma \in \Sigma$.

\bigskip
In general, let $\overline{k}$ be an algebraic closure of $k$. Then $\mathfrak{m}_{\overline{k}} := \mathfrak{m} \otimes_k \overline{k}$ and
$\mathfrak{I}_{\overline{k}} := \mathfrak{I} \otimes_k \overline{k}$ are ideals of $A_{\overline{k}} := A \otimes_k \overline{k} = \overline{k}[X_1, \ldots, X_n]$.
Moreover, if we set $r = [A/\mathfrak{m} : k]$, then there are distinct maximal ideals $\mathfrak{m}_1, \ldots, \mathfrak{m}_r$ 
and ideals $\mathfrak{I}_1, \ldots, \mathfrak{I}_r$ of $A_{\overline{k}}$ such that $\mathfrak{m}_{\overline{k}} = \mathfrak{m}_1 \cdots \mathfrak{m}_r$,
$\mathfrak{I}_{\overline{k}} = \mathfrak{I}_1 \cdots \mathfrak{I}_r$ and $\sqrt{\mathfrak{I}_i} = \mathfrak{m}_i$ for $i=1, \ldots, r$.

\begin{Claim}\label{claim:prop:length:estimate:maximal:ideal:02}
$\partial_\gamma(\mathfrak{I}_i) \subseteq \mathfrak{m}_i$ for any $\gamma \in \Sigma$ and $i=1, \ldots, r$.
\end{Claim}

\begin{proof}
Without loss of generality, we may assume that $i=1$. Fix $f_1 \in \mathfrak{I}_1$.
We prove it by induction on $|\gamma|$. The assertion for $|\gamma|= 0$ is obvious, so that we may assume that $\gamma \not= (0,\ldots, 0)$.
As $\mathfrak{m}_1 + \mathfrak{I}_i = A$ for $i=2, \ldots, r$, one can find
$f_i \in \mathfrak{I}_i$ such that $f_i \not\in \mathfrak{m}_1$. 
Since $\partial_\gamma$ is linear over $\overline{k}$, one has
$\partial_\gamma(\mathfrak{I}_{\overline{k}}) \subseteq \mathfrak{m}_{\overline{k}}$, so that
$\partial_\gamma(f_1 \cdots f_r) \in \mathfrak{m}_1$. On the other hand, by Leibnitz's rule \eqref{eqn:Leibniz:rule:mult},
\[
\partial_\gamma(f_1 \cdots f_r) = \partial_\gamma(f_1) f_2 \cdots f_r + \sum_{\substack{\gamma_1, \ldots, \gamma_r \in \ZZ_{\geq 0}^n\\\gamma_1 \not= \gamma,\ \gamma_1 + \cdots + \gamma_r = \gamma}} \partial_{\gamma_1}(f_1) \cdots \partial_{\gamma_r}(f_r)
\]
and $\sum_{\substack{\gamma_1, \ldots, \gamma_r \in \ZZ_{\geq 0}^n\\\gamma_1 \not= \gamma,\ \gamma_1 + \cdots + \gamma_r = \gamma}} \partial_{\gamma_1}(f_1) \cdots \partial_{\gamma_r}(f_r) \in \mathfrak{m}_1$
by the hypothesis of induction, so that $\partial_\gamma(f_1) f_2 \cdots f_r \in \mathfrak{m}_1$, and hence
$\partial_\gamma(f_1) \in \mathfrak{m}_1$ because $f_2 \cdots f_r \not\in \mathfrak{m}_1$.
\end{proof}

By Claim~\ref{claim:prop:length:estimate:maximal:ideal:02} together with the case where $k = A/\mathfrak{m}$, one has
\[
\dim_{\overline{k}} (A_{\overline{k}}/\mathfrak{I}_i) = \length_{A_{\overline{k}}}(A_{\overline{k}}/\mathfrak{I}_i) \geq \#(\Sigma)
\]
for $i=1, \ldots, r$.
Therefore,
{\allowdisplaybreaks \begin{align*}
\length_{A}(A/\mathfrak{I}) & = \frac{\dim_k (A/\mathfrak{I})}{[(A/\mathfrak{m}) : k]} = \frac{\dim_{\overline{k}} \left( (A/\mathfrak{I}) \otimes_k \overline{k}\right)}{[(A/\mathfrak{m}) : k]} = \frac{\dim_{\overline{k}} \left( A_{\overline{k}}/\mathfrak{I}_{\overline{k}}\right)}{[(A/\mathfrak{m}) : k]} \\
& = \frac{\dim_{\overline{k}} \left( \bigoplus_{i=1}^r A_{\overline{k}}/\mathfrak{I}_i \right)}{[(A/\mathfrak{m}) : k]}
= \frac{\sum_{i=1}^r \dim_{\overline{k}} (A_{\overline{k}}/\mathfrak{I}_i)}{[(A/\mathfrak{m}) : k]} \\
& \geq \frac{\sum_{i=1}^r \#(\Sigma)}{[(A/\mathfrak{m}) : k]} = \#(\Sigma),
\end{align*}}%
as required.
\end{proof}

Let $\mathfrak{p}$ be a prime ideal of $A$, $K$ be the fractional field of $A/\mathfrak{p}$ and $x_i$ be the image of $X_i$ in $A/\mathfrak{p}$ for $i=1, \ldots, n$.
Let 
\[
K_i := \begin{cases}
k(x_i, \ldots, x_n) & \text{if $i=1, \ldots, n$},\\
k & \text{if $i=n+1$}
\end{cases}
\]
for $i=1, \ldots, n+1$. Then
\[
K = K_1 \supseteq K_2 \supseteq \cdots \supseteq K_{n} \supseteq K_{n+1} = k.
\]
Let $\sigma_i$ be the transcendental degree of $K_i$ over $K_{i+1}$ for $i=1, \ldots, n$.
Note that $\sigma_1 + \cdots + \sigma_n = \dim (A/\mathfrak{p})$ and $\sigma_i \in \{ 0, 1 \}$ for all $i=1, \ldots, n$.
We set 
\[
\begin{cases}
\Upsilon := \{ i \in \{ 1, \ldots, n \} \mid \sigma_i = 0 \}, \\
(\RR^n)_\Upsilon := \{ (\xi_1, \ldots, \xi_n) \in \RR^n \mid \text{$\xi_j = 0$ for all $j \in \{ 1, \ldots, n \} \setminus \Upsilon$} \} \cong \RR^{\Upsilon}, \\
\Delta_\Upsilon = \Delta \cap (\RR^n)_\Upsilon, \quad \Sigma_\Upsilon := \Sigma \cap (\RR^n)_\Upsilon.
\end{cases}
\]
Note that $s:=\#(\Upsilon)$ is the codimension of $\Spec(A/\mathfrak{p})$ in $\Spec(A)$.

\begin{Corollary}
\label{coro:lower:estimate:mult}
Let $\mathfrak{I}$ be an ideal of $A$ such that $\mathfrak{p}$ is a minimal prime of $\mathfrak{I}$ and
$\partial_{\gamma}(\mathfrak{I}) \subseteq \mathfrak{p}$ for all $\gamma \in \Sigma$. Then
one has 
\[
\length_{A_\mathfrak{p}}((A/\mathfrak{I})_\mathfrak{p}) \geq
\#(\Sigma).
\]
In particular,  
$\length_{A_\mathfrak{p}}((A/\mathfrak{I})_\mathfrak{p}) \geq
\vol_{(\RR^n)_\Upsilon}(\Delta_\Upsilon)$,
where $\vol_{(\RR^n)_\Upsilon}(\Delta_\Upsilon)$ is the volume of
$\Delta_\Upsilon$ in $(\RR^n)_\Upsilon$.
\end{Corollary}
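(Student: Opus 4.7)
The plan is to localize $A$ at a multiplicative set that inverts the ``transcendental'' coordinates, thereby reducing to the maximal-ideal case handled by Proposition~\ref{prop:length:estimate:maximal:ideal}. Let $T := \{1, \ldots, n\} \setminus \Upsilon$. By construction the tower $K_1 \supseteq \cdots \supseteq K_{n+1} = k$ takes a transcendental step exactly at the indices $i \in T$, so $\{x_i : i \in T\}$ is a transcendence basis of $K$ over $k$ and $K$ is algebraic over $k_0 := k(x_i : i \in T)$. Consequently the polynomial ring $R := k[X_i : i \in T]$ injects into $A/\mathfrak{p}$, which is equivalent to saying that $S := R \setminus \{0\}$ is a multiplicative subset of $A$ disjoint from $\mathfrak{p}$.

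Set $A' := S^{-1}A = k_0[X_i : i \in \Upsilon]$, $\mathfrak{m}' := S^{-1}\mathfrak{p}$, and $\mathfrak{I}' := S^{-1}\mathfrak{I}$. Because $A'/\mathfrak{m}' = S^{-1}(A/\mathfrak{p})$ is a domain embedding into $K$ and algebraic over $k_0$, it is itself a field, so $\mathfrak{m}'$ is a maximal ideal of $A'$. The prime correspondence under localization shows that $\mathfrak{m}'$ remains a minimal prime of $\mathfrak{I}'$, and since localization preserves length,
\[
\length_{A_\mathfrak{p}}((A/\mathfrak{I})_\mathfrak{p}) = \length_{A'_{\mathfrak{m}'}}((A'/\mathfrak{I}')_{\mathfrak{m}'}).
\]

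I would then apply Proposition~\ref{prop:length:estimate:maximal:ideal} to the triple $(A', \mathfrak{m}', \mathfrak{I}')$ with the lower saturated subset $\Delta_\Upsilon \subseteq \RR_{\geq 0}^{\Upsilon}$, whose lattice points form $\Sigma_\Upsilon$. For $\gamma \in \Sigma_\Upsilon \subseteq \Sigma$, the multi-index has vanishing coordinates in $T$, so $\partial_\gamma$ only differentiates in the variables $X_i$ with $i \in \Upsilon$; since the elements of $S$ lie in $R = k[X_i : i \in T]$ and are annihilated by those partials, Leibnitz's rule \eqref{eqn:Leibniz:rule:mult} collapses to $\partial_\gamma(s^{-1}f) = s^{-1}\partial_\gamma(f)$ for $s \in S$ and $f \in A$, hence $\partial_\gamma(\mathfrak{I}') \subseteq S^{-1}\mathfrak{p} = \mathfrak{m}'$. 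Proposition~\ref{prop:length:estimate:maximal:ideal}, applied over the characteristic-zero field $k_0$, therefore yields
\[
\length_{A'_{\mathfrak{m}'}}((A'/\mathfrak{I}')_{\mathfrak{m}'}) \geq \#(\Sigma_\Upsilon).
\]
Combining the two displays proves the first inequality, and Proposition~\ref{prop:lower:saturated}(3) applied to the lower $\ZZ_{\geq 0}$-saturated subset $\Sigma_\Upsilon$ of $\ZZ_{\geq 0}^{\Upsilon}$ and $\Delta_\Upsilon \subseteq \RR_{\geq 0}^{\Upsilon}$ gives $\vol_{(\RR^n)_\Upsilon}(\Delta_\Upsilon) \leq \#(\Sigma_\Upsilon)$, which chains to the desired volume bound.

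The main conceptual step is the construction of the localization so that $\mathfrak{p}$ becomes a maximal ideal while the base field remains of characteristic zero; after that, the compatibility of the normalized differential operators with the localization is automatic because the partials we invoke and the elements we invert involve complementary sets of variables, and the hypothesis transports verbatim.
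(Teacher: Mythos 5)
Your proof is correct and takes essentially the same route as the paper: both localize at $S = k[X_j : j \notin \Upsilon]\setminus\{0\}$ to make $\mathfrak{p}$ a maximal ideal over the field $k(X_j : j \notin \Upsilon)$, check via Leibnitz that the relevant $\partial_\gamma$ (involving only $\Upsilon$-variables) descend to the localization, apply Proposition~\ref{prop:length:estimate:maximal:ideal}, and finish with Proposition~\ref{prop:lower:saturated}(3). One remark: like the paper's own proof, what you actually establish is $\length_{A_\mathfrak{p}}((A/\mathfrak{I})_\mathfrak{p}) \geq \#(\Sigma_\Upsilon)$, not the stated $\#(\Sigma)$ -- and the stated bound is in fact false whenever $\Upsilon \subsetneq \{1,\ldots,n\}$ (e.g.\ $\mathfrak{I}=\mathfrak{p}=(0)$ with $\Delta$ large), so $\#(\Sigma)$ in the corollary should be read as $\#(\Sigma_\Upsilon)$; this is the inequality your proof, and the ensuing volume bound, actually use.
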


\begin{proof}
We set $\Upsilon_0 = \{ i_1, \ldots, i_s \}$ and $\{ 1, \ldots, n \} \setminus \Upsilon_0 = \{ j_1, \ldots, j_t \}$
($i_1 < \cdots < i_s$ and $j_1 < \cdots < j_t$). By our construction,
$x_{j_1}, \ldots, x_{j_t}$ are algebraically independent over $k$ and
$x_{i_1}, \ldots, x_{i_s}$ are algebraic over $k(x_{j_1}, \ldots, x_{j_t})$.
In particular, $k[X_{j_1}, \ldots, X_{j_t}] \setminus \{ 0 \} \subseteq A \setminus \mathfrak{p}$ and 
$K$ is finite over $k(x_{j_1}, \ldots, x_{j_t})$.

Let $A_S$ be the localization of $A$ with respect to $S = k[X_{j_1}, \ldots, X_{j_t}] \setminus \{ 0 \}$, that is,
$A_S = k(X_{j_1}, \ldots, X_{j_t})[X_{i_1}, \ldots, X_{i_s}]$.
By the above observation,
$\mathfrak{p}A_S$ gives rise to a maximal ideal of $A_S$ and $\mathfrak{p}A_S$ is a minimal prime of $\mathfrak{I}A_S$. Moreover, as
$\partial_\gamma(\mathfrak{I}) \subseteq \mathfrak{p}$
for $\gamma \in \Sigma_\Upsilon$, 
one can see $\partial_\gamma(\mathfrak{I} A_S) \subseteq \mathfrak{p}A_S$.
Therefore, the first assertion of the corollary follows from Proposition~\ref{prop:length:estimate:maximal:ideal}.
The  second assertion follows from Proposition~\ref{prop:lower:saturated}.
\end{proof}

\begin{Example}\label{example:mult:estimate}
Under the assumptions of Corollary~\ref{coro:lower:estimate:mult},
we introduce a special closed lower saturated
subset $\Delta$ of $\RR_{\geq 0}^n$.
Let $\pmb{d} = (d_1, \ldots, d_n)$ be a sequence of positive number. For $\varphi = (\xi_1, \ldots, \xi_n) \in \RR_{\geq 0}^n$,
we set $|\varphi|_{\pmb{d}} := \xi_1/d_1 + \cdots + \xi_n/d_n$. 
For a non-negative number $\epsilon$, we set
$\Delta := \{ \varphi \in \RR_{\geq 0}^n \mid |\varphi|_{\pmb{d}} \leq \epsilon \}$. Note that $\Delta$ is lower saturated and
\[
\vol_{(\RR^n)_\Upsilon}(\Delta_\Upsilon) = (\epsilon^s/s!) \prod_{j \in \Upsilon} d_j =
 (\epsilon^s/s!) d_{1}^{1-\sigma_1} \cdots d_{n}^{1-\sigma_n}.
\]
Therefore,
by Corollary~\ref{coro:lower:estimate:mult}, 
if $\partial_\gamma(\mathfrak{I}) \subseteq \mathfrak{p}$ for all $\gamma \in \Sigma$,
then 
\begin{equation}\label{eqn:example:mult:estimate:01}
\length_{A_\mathfrak{p}}((A/\mathfrak{I})_\mathfrak{p}) \geq (\epsilon^s/s!) d_1^{1 - \sigma_1} \cdots d_n^{1 - \sigma_n}.
\end{equation}

\medskip
We assume that there are integers $\alpha_1, \ldots, \alpha_m, \alpha_{m+1}$ such that
\[
1 = \alpha_1 < \alpha_2 < \cdots < \alpha_m < \alpha_{m+1}=n+1
\]
and $d_{\alpha_i} = \cdots = d_{\alpha_{i+1} - 1}$
for all $i=1, \ldots, m$. We set
\[
\begin{cases}
\mathfrak{d}_i := d_{\alpha_i},\quad
\mathfrak{n}_i := \alpha_{i+1} - \alpha_i, \\
\delta_i := \text{the trancendental degree of $K_{\alpha_i}$ over $K_{\alpha_{i+1}}$}.
\end{cases}
\]
for $i=1, \ldots, m$.
In this case, \eqref{eqn:example:mult:estimate:01} asserts that
\begin{equation}\label{eqn:example:mult:estimate:02}
\length_{A_\mathfrak{p}}((A/\mathfrak{I})_\mathfrak{p}) \geq (\epsilon^s/s!) \prod_{i=1}^m \mathfrak{d}_{i}^{\mathfrak{n}_i - \delta_i}.
\end{equation}
Indeed, as $\sum_{j=\alpha_i}^{\alpha_{i+1}-1} \sigma_j = \delta_i$,
one has 
\[
\prod_{j=1}^n d_j^{1 - \sigma_j} = \prod_{i=1}^m \prod_{j=\alpha_i}^{\alpha_{i+1}-1} \mathfrak{d}_{i}^{1 - \sigma_j} = \prod_{i=1}^m \mathfrak{d}_{i}^{\mathfrak{n}_i - \delta_i},
\]
as desired.
\end{Example}

\appendix
\renewcommand{\theTheorem}{\Alph{section}.\arabic{Theorem}}
\renewcommand{\theClaim}{\Alph{section}.\arabic{Theorem}.\arabic{Claim}}
\renewcommand{\theequation}{\Alph{section}.\arabic{equation}}
\section{Lebesgue measurability of lower saturated subset}

In this appendix, let us consider the measurability of lower saturated subsets of $\RR_{\geq 0}^n$, that is,
one has the following:

\begin{Proposition}\label{prop:measurable:Delta}
\label{prop:measure:lower:saturated:subset}
Let $\Delta$ be a lower saturated subset of $\RR_{\geq 0}^n$.
Let $\overline{\Delta}$ be the closure of $\Delta$ and $\Delta^{\circ}$ be
the set of all interior points of $\Delta$.
Then $\vol(\overline{\Delta} \setminus \Delta^{\circ}) = 0$. In particular,
$\Delta$ is Lebesgue measurable. 
\end{Proposition}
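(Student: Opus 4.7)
The plan is to sandwich $\Delta$ between an inner closed approximation and an outer half-open-cube approximation built from Proposition~\ref{prop:lower:saturated}, and then show the sandwich forces $\vol(\overline{\Delta} \setminus \Delta^\circ) = 0$. For each $M > 0$, $\Delta \cap [0,M]^n$ is again lower saturated, and $(\overline{\Delta} \setminus \Delta^\circ) \cap (0,M)^n \subseteq \overline{\Delta \cap [0,M]^n} \setminus (\Delta \cap [0,M]^n)^\circ$; combined with the fact that the coordinate hyperplanes are Lebesgue null, this reduces matters to the bounded case $\Delta \subseteq [0,M]^n$.

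For each integer $N \geq 1$ I put $\Sigma^{(N)} := N\Delta \cap \ZZ_{\geq 0}^n$, which is lower $\ZZ_{\geq 0}$-saturated because $N\Delta$ is lower saturated. Proposition~\ref{prop:lower:saturated}(3) applied to $N\Delta$, rescaled by $1/N$, gives
\[
A_N := (1/N)\Delta(\Sigma^{(N)}) \subseteq \Delta \subseteq (1/N)\Delta'(\Sigma^{(N)}) =: B_N,
\]
where $A_N$ is closed by Proposition~\ref{prop:lower:saturated}(1) and $B_N$ is a finite union of half-open cubes of side $1/N$. Since $A_N^\circ$ is an open subset of $\Delta$, $A_N^\circ \subseteq \Delta^\circ$, and $\Delta \subseteq \overline{B_N}$ gives $\overline{\Delta} \subseteq \overline{B_N}$. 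The boundaries of finitely many cubes being Lebesgue null, $\vol(A_N^\circ) = \vol(A_N)$ and $\vol(\overline{B_N}) = \vol(B_N)$, so
\[
\vol(\overline{\Delta} \setminus \Delta^\circ) \leq \vol(B_N) - \vol(A_N).
\]

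It then suffices to show the right-hand side tends to $0$ as $N \to \infty$. Lower $\ZZ_{\geq 0}$-saturatedness of $\Sigma^{(N)}$ yields the descriptions $\Delta(\Sigma^{(N)}) = \{\varphi \in \RR_{\geq 0}^n : \lceil\varphi\rceil \in \Sigma^{(N)}\}$ and $\Delta'(\Sigma^{(N)}) = \{\varphi \in \RR_{\geq 0}^n : \lfloor\varphi\rfloor \in \Sigma^{(N)}\}$. Writing $\mathbf{1} := (1,\ldots,1)$ and partitioning $\RR_{\geq 0}^n$ into integer unit cubes then produces
\[
\vol(B_N) - \vol(A_N) = \frac{1}{N^n}\#\{\gamma \in \Sigma^{(N)} : \gamma + \mathbf{1} \notin \Sigma^{(N)}\}.
\]
The main obstacle is estimating this ``upper boundary along $\mathbf{1}$'' count. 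On each integer diagonal line $\{\gamma_0 + k\mathbf{1} : k \in \ZZ\}$, the intersection with $\Sigma^{(N)}$ is a discrete interval by lower saturatedness, so contributes at most one such $\gamma$. Every line meeting $\Sigma^{(N)} \subseteq [0,NM]^n$ has a unique representative $\gamma_0 \in [0,NM]^n \cap \ZZ^n$ with $\min_i \gamma_{0,i} = 0$, and such $\gamma_0$ number at most $n(NM+1)^{n-1} = O(N^{n-1})$. Hence $\vol(B_N) - \vol(A_N) = O(1/N) \to 0$, whence $\vol(\overline{\Delta} \setminus \Delta^\circ) = 0$; measurability of $\Delta$ then follows since $\Delta$ differs from the open set $\Delta^\circ$ by a subset of this null boundary.
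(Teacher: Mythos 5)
Your proof is correct and follows the paper's overall sandwich strategy: trap $\Delta$ between an inner closed union of boxes and an outer union of half-open cubes built from a fine grid, and control the gap by a boundary count that vanishes as the mesh shrinks. Where you differ is in how that combinatorial estimate is obtained. The paper translates the outer set $\Delta'_{N,m}$ by $-(\frac{1}{m},\ldots,\frac{1}{m})$ to get a same-volume set $\Delta''_{N,m}$, observes that $\Delta''_{N,m}\setminus\Delta_{N,m}$ lies in cubes whose base points have some coordinate equal to zero, and then bounds $\#(\partial(J_{N,m})^n)$ by induction; you instead compute the exact identity $\vol(B_N)-\vol(A_N)=N^{-n}\#\{\gamma\in\Sigma^{(N)}:\gamma+\mathbf{1}\notin\Sigma^{(N)}\}$ via the $\lceil\cdot\rceil$/$\lfloor\cdot\rfloor$ descriptions of $\Delta(\Sigma^{(N)})$ and $\Delta'(\Sigma^{(N)})$, and then group lattice points along diagonal lines in direction $\mathbf{1}$, using lower $\ZZ_{\geq 0}$-saturatedness to see each line contributes at most one to the count. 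These are two bookkeeping devices for the same $O(N^{n-1})$ bound; your diagonal-line count is perhaps more transparent, and reusing Proposition~\ref{prop:lower:saturated} for the scaled set $N\Delta$ makes the argument a bit more modular, whereas the paper re-derives its sandwich sets $\Delta_{N,m}$, $\Delta'_{N,m}$ and keeps the truncation radius $N$ and mesh $1/m$ as independent parameters (you tie the mesh to $1/N$ after truncating at $M$, an inessential reparametrization).
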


\begin{proof}
For a positive integer $N$, we set $\Delta_{N} := \Delta \cap [0, N]^n$.
Then $\Delta_{N}$ is lower saturated.
Moreover, we set
\[
{\displaystyle J_{N, m} := \left\{\left. \frac{j}{m} \ \right|\  0 \leq j \leq mN,\ j \in \ZZ \right\}}\quad\text{and}\quad \Sigma_{N, m} := \Delta \cap (J_{N, m})^n
\]
for a positive integer $m$. Further, we set
\[
\Delta_{N, m} := \bigcup_{\varphi \in \Sigma_{N, m}} \mathbb{I}_\varphi\quad\text{and}\quad 
\Delta'_{N, m} := \kern-1em \bigcup_{(a_1, \ldots, a_n) \in \Sigma_{N, m}} \left[a_1, a_1+ \frac{1}{m}\right] \times \cdots \times \left[a_n, a_n + \frac{1}{m}\right].
\]
Note that $\Delta_{N,m} \subseteq \Delta_N \subseteq \Delta'_{N, m}$, and $\Delta_{N,m}$ and $\Delta'_{N, m}$ are closed.
Then one has the following:

\begin{Claim}\label{claim:prop:measurable:Delta:01}
\begin{enumerate}[(1)]
\item 
$\vol(\Delta'_{N,m} \setminus \Delta_{N,m}) \leq n(1 + N)^{n-1}/m$.

\item
$\vol(\overline{\Delta} \cap [0, N]^n) \leq \vol(\Delta'_{N,m})$.

\item
$\vol(\Delta_{N,m}) \leq \vol(\Delta^{\circ} \cap [0, N]^n)$.

\item
$\vol((\overline{\Delta} \setminus \Delta^{\circ}) \cap [0, N]^n) = 0$.
\end{enumerate}
\end{Claim}

\begin{proof}
(1) We set
\[
\Delta''_{N,m} := \Delta'_{N, m} - \left(\frac{1}{m}, \ldots, \frac{1}{m}\right) = \kern-1em \bigcup_{(a_1, \ldots, a_n) \in \Sigma_{N, m}} \left[a_1- \frac{1}{m}, a_1\right] \times \cdots \times \left[a_n- \frac{1}{m}, a_n\right].
\]
Note that
$\Delta_{N, m} \subseteq \Delta''_{N, m}$.
Moreover, if we set
\[
\partial S := \{ (a_1, \ldots, a_n) \in S \mid \exists\, i \in \{ 1, \ldots, n \},\ a_i = 0 \}
\]
for a subset $S$ of $(J_{N, m})^n$, then
\[
\Delta''_{N, m} \setminus \Delta_{N,m} \subseteq \bigcup_{(a_1, \ldots, a_n) \in \partial\Sigma_{N,m}} \left[a_1- \frac{1}{m}, a_1\right] \times \cdots \times \left[a_n- \frac{1}{m}, a_n\right].
\]
Therefore, one has
\begin{align*}
\vol(\Delta'_{N,m} \setminus \Delta_{N, m}) & = \vol(\Delta'_{N,m}) - \vol(\Delta_{N,m}) \\
& = \vol(\Delta''_{N, m}) - \vol(\Delta_{N,m}) = \vol(\Delta''_{N,m} \setminus \Delta_{N,m}) \\
& \leq \frac{\#(\partial\Sigma_{N, m})}{m^n} \leq \frac{\#(\partial (J_{N, m})^n)}{m^n}.
\end{align*}
On the other hand, since
\begin{multline*}
\partial (J_{N,m})^n =
\{ (a_1, \ldots, a_{n-1}, 0)  \mid (a_1, \ldots, a_{n-1}) \in (J_{N,m})^{n-1} \} \\
\amalg
\{ (a_1, \ldots, a_{n-1}, a_n)  \mid \text{$(a_1, \ldots, a_{n-1}) \in \partial (J_{N,m})^{n-1}$ and $a_n \not= 0$} \},
\end{multline*}
one has 
\begin{align*}
\#(\partial(J_{N,m})^n) & = (1 + mN)^{n-1} + (mN)\#(\partial(J_{N,m})^{n-1}) \\
& \leq m^{n-1}(1 + N)^{n-1} + m(1+N) \#(\partial(J_{N,m})^{n-1}),
\end{align*}
so that one can see that
$\#(\partial(J_{N,m})^n) \leq nm^{n-1}(1 + N)^{n-1}$ by induction on $n$, as required.

\medskip
(2) First as $\vol([0, N]^n \setminus [0, N)^n) = 0$, 
one has $\vol(\overline{\Delta} \cap [0, N]^n) = \vol(\overline{\Delta} \cap [0, N)^n)$.
Next let us see that $\overline{\Delta} \cap [0, N)^n\subseteq \overline{\Delta_N}$.
Indeed, if $\varphi \in \overline{\Delta} \cap [0, N)^n$, then
there is a sequence $\{ \varphi_m \}_{m=1}^{\infty}$ such that $\varphi_m \in \Delta$ for all $m$ and $\varphi = \lim_{m\to\infty} \varphi_m$.
As $\varphi \in [0, N)^n$, there is a positive integer $M$ such that $\varphi_m \in [0, N)^n$ for $m \geq M$, so that $\varphi \in \overline{\Delta_N}$, as desired.
Therefore, we obtain  $\vol(\overline{\Delta} \cap [0, N]^n) \leq \vol(\overline{\Delta_N})$.

On the other hand, as $\Delta_N \subseteq \Delta'_{N,m}$ and
$\Delta'_{N,m}$ is closed, one has $\overline{\Delta_N} \subseteq \Delta'_{N,m}$, so that
$\vol(\overline{\Delta_N}) \leq \vol(\Delta'_{N,m})$, and hence the assertion of (2) follows.

\medskip
(3) As $\mathbb{I}^{\circ}_{\varphi} \subseteq \Delta^{\circ}$ for $\varphi \in \Sigma_{N, m}$, one obtains
$\bigcup_{\varphi \in \Sigma_{N, m}} \mathbb{I}^{\circ}_{\varphi} \subseteq \Delta^{\circ} \cap [0, N]^n$, and hence
$\vol\left(\bigcup_{\varphi \in \Sigma_{N, m}} \mathbb{I}^{\circ}_{\varphi}\right)
\leq \vol(\Delta^{\circ} \cap [0, N]^n)$. On the other hand, note that
\[
\Delta_{N, m} \setminus 
\bigcup\nolimits_{\varphi \in \Sigma_{N, m}} \mathbb{I}^{\circ}_{\varphi} =
\left(\bigcup\nolimits_{\varphi \in \Sigma_{N, m}} \mathbb{I}_{\varphi}\right) \setminus 
\left(\bigcup\nolimits_{\varphi \in \Sigma_{N, m}} \mathbb{I}^{\circ}_{\varphi}\right) \subseteq 
\bigcup\nolimits_{\varphi \in \Sigma_{N, m}} (\mathbb{I}_{\varphi} \setminus \mathbb{I}^{\circ}_{\varphi}),
\]
so that
\begin{align*}
 0 & \leq \vol(\Delta_{N, m}) - \vol\left(
\bigcup\nolimits_{\varphi \in \Sigma_{N, m}} \mathbb{I}^{\circ}_{\varphi}\right) = \vol\left(\Delta_{N, m} \setminus 
\bigcup\nolimits_{\varphi \in \Sigma_{N, m}} \mathbb{I}^{\circ}_{\varphi}\right) \\
& \leq \vol\left(
\bigcup\nolimits_{\varphi \in \Sigma_{N, m}} (\mathbb{I}_{\varphi} \setminus \mathbb{I}^{\circ}_{\varphi})\right) \leq \sum\nolimits_{\varphi \in \Sigma_{N, m}} \vol(\mathbb{I}_{\varphi} \setminus \mathbb{I}^{\circ}_{\varphi}) = 0.
\end{align*}
Therefore, $\vol(\Delta_{N,m}) \leq \vol(\Delta^{\circ} \cap [0, N]^n)$, as required.

\medskip
(4) 
By (1), (2) and (3), one has
\begin{align*}
0 & \leq \vol((\overline{\Delta} \setminus \Delta^{\circ}) \cap [0, N]^n) =
\vol(\overline{\Delta} \cap [0, N]^n) - \vol(\Delta^{\circ} \cap [0, N]^n) \\
& \leq \vol(\Delta'_{N,m}) - \vol(\Delta_{N,m}) \leq n(1 + N)^{n-1}/m,
\end{align*}
so that the assertion of (4) follows because $m$ is an arbitrary positive integer.
\end{proof}

By (4) in Claim~\ref{claim:prop:measurable:Delta:01},
one can see that $\vol(\overline{\Delta} \setminus \Delta^{\circ}) = 0$ because 
\[
\bigcup_{N\geq 1} (\overline{\Delta} \setminus \Delta^{\circ}) \cap [0, N]^n = \overline{\Delta} \setminus \Delta^{\circ}.
\]
In particular, since
$\Delta = \Delta^{\circ} \cup (\Delta \cap (\overline{\Delta} \setminus \Delta^{\circ}))$ and
the Lebesgue measure is complete,
the measurability of $\Delta$ follows.
\end{proof}

\end{document}